\theoremstyle{plain}
\newtheorem{thm}{Theorem}[section]
\newtheorem{lem}{Lemma}[section]
\newtheorem{prop}{Proposition}[section]
\theoremstyle{definition}
\begin{document}

\title{Remarks on asymptotic isometric embeddings of conic transforms for torus actions}
\author{Andrea Galasso\footnote{\noindent{\bf Address:} Dipartimento di Matematica e Applicazioni, Universit\`a degli Studi di Milano-Bicocca, Via R.	Cozzi 55, 20125 Milano, Italy; {\bf ORCID iD:} 0000-0002-5792-1674; {\bf e-mail}: andrea.galasso@unimib.it; andrea.galasso.91@gmail.com}}

\date{}

\maketitle
	
\begin{abstract} 
	Consider a Hodge manifold and assume that a torus acts on it in a Hamiltonian and holomorphic manner and that this action linearizes on a given quantizing line bundle. Inside the dual of the line bundle one can define the circle bundle, which is a strictly pseudoconvex CR manifold. Then, there is an associated unitary representation on the Hardy space of the circle bundle. Under suitable assumptions on the moment map, we consider certain loci in unit circle bundle, naturally associated to a ray through an irreducible weight. Their quotients are called conic transforms. We introduce maps which are asymptotic embeddings of conic transforms making use of the corresponding equivariant Szeg\H{o} projector.  
\end{abstract}
	
	\tableofcontents
	
	\bigskip
	\textbf{Keywords:} symplectic manifold, group action
	
	\textbf{Mathematics Subject Classification:}  53D20, 32Q40
	
%\textbf{Data Availability Statements:} Data sharing not applicable to this article as no datasets were generated or analyzed during the current study.

\section{Introduction}

Let $(M,\,\omega)$ be a compact connected Hodge manifold of complex dimension $d$ with complex structure $J$ and quantum line bundle $(L,h)$ whose curvature form of the unique compatible and holomorphic connection $\nabla$ is $-2i\,\omega$; we shall denote by $g$ the Riemannian structure $\omega(\cdot\,,J\cdot)$. The volume form $\omega^{\wedge d}/d!$ is denoted by $\mathrm{dV}_M$. Let $X\subset L^{\vee}$ the unit circle bundle, with projection $\pi:X\rightarrow M$ and connection contact form $\alpha$ such that $\mathrm{d}\alpha =2\,\pi^*(\omega)$. Then $(X,\,\alpha)$ is a CR manifold and $\alpha$ is the contact form, see~\cite{z2}. There is a natural volume form on $X$ given by $\mathrm{dV}_X=(2\pi)^{-1}\alpha \wedge \pi^*(\mathrm{dV}_M)$. The Hardy space $H(X)$ represents the quantum space and under some suitable hypothesis classical symmetries on $M$ of a Lie group $G$ give rise to a quantum representation of $G$ on $H(X)$. Fix a coprime weight $\boldsymbol{\nu}$ labeling a unitary irreducible representation in $\mathfrak{g}^{\vee}$. The ladder $\mathcal{L}$ is the set of unitary irreducible representations labeled by integer elements of the ray $i\mathbb{R}_+\cdot\boldsymbol{\nu}$. In the setting of ladder representations, see \cite{gs}, one is led to consider spaces $H_{k\boldsymbol{\nu}}(X)$ for a fixed unitary irreducible representation $\boldsymbol{\nu}$ as $k \rightarrow +\infty$ and the corresponding projectors on it. The aim of this paper is to study geometric properties of these projectors.

More precisely, we shall consider the Abelian case: given a torus $T$ of dimension $n$ and a holomorphic and Hamiltonian action $\mu :T\times M\rightarrow M$, denote the moment map by $\Phi:M\rightarrow \mathfrak{t}^{\vee}$  (where $\mathfrak{t}$ and $\mathfrak{t}^{\vee}$ is the Lie algebra and co-algebra, respectively and we 
shall equivariantly identify $\mathfrak{t}\cong\mathfrak{t}^{\vee}\cong i\,\mathbb{R}^n$). The Hamiltonian action $\mu$ naturally induces an infinitesimal contact action of $\mathfrak{t}$ on $X$ (see~\cite{k}); explicitly, if $\boldsymbol{\xi}\in \mathfrak{t}$ and $$\boldsymbol{\xi}_M(m):=\mathrm{d}_{\mathrm{Id}}{\mu}_m(\boldsymbol{\xi})$$ is the corresponding Hamiltonian vector field on $M$ (here $\mu_m\,:\,T\rightarrow M$ is given by fixing $m\in M$ and its differential at the identity $\mathrm{Id}\in T$ is denoted by $\mathrm{d}_{\mathrm{Id}}{\mu}_m\,:\, \mathfrak{t}\rightarrow T_mM$) then its contact lift $\boldsymbol{\xi}_X$ is
\begin{equation} \label{eq:inf}
\boldsymbol{\xi}_X:= \mathrm{d}_{\mathrm{Id}}\tilde{\mu}_{\,\cdot}(\boldsymbol{\xi})= \boldsymbol{\xi}_M^\sharp-\langle \Phi\circ \pi, \boldsymbol{\xi}\rangle \,\partial_\theta\,,
\end{equation}
where $\boldsymbol{\xi}_M^\sharp$ denotes the horizontal lift on $X$ of a vector field $\boldsymbol{\xi}_M$ on $M$, and $\partial_\theta$ is the generator of the structure circle action on $X$ (similarly as before $\mathrm{d}_{\mathrm{Id}}\tilde{\mu}_x\,:\,\mathfrak{t}\rightarrow T_xX$ denotes the differential of the map $\tilde{\mu}_x\,:\,T\rightarrow X$ at the identity).

Furthermore, suppose that the infinitesimal action \eqref{eq:inf} can be integrated to an action $$\widetilde{\mu}:T\times X\rightarrow X$$
acting via contact and CR automorphism on $X$. We recall that, since $X$ is the boundary of the strictly pseudoconvex domain 
$$D = \{x \in L^{\vee}\,:\, \lvert x\rvert_h <1\} \subset L^{\vee}$$
it has a Szeg\H{o} kernel $\Pi(x,\, y)$ which projects $L^2(X)$ to the Hardy
space $H(X)$ of square summable functions which are boundary values of holomorphic functions in $D$. Under these assumptions, there is a naturally induced unitary representation of $T$ on the Hardy space
$H(X)\subset L^2(X)$ given by
\[\hat{\mu}_t(s)(x):= s(\tilde{\mu}_{t^{-1}(x)})\,,\qquad s\in H(X),\,x\in X\text{ and }t\in T\,. \]
Hence $H(X)$ can be equivariantly decomposed over the irreducible representations of $T$:
\[ H(X)=\bigoplus _{\boldsymbol{\nu}\in \widehat{T}} H(X)_{\boldsymbol{\nu}}, \]
where $\widehat{T}$ is the collection of all irreducible representations of $T$ which can be identified with $\mathbb{Z}^n$.
If $\Phi (m)\neq 0$ for every $m\in M$, then each isotypical component $H(X)_{\boldsymbol{\nu}}$
is finite dimensional, see \cite{pao-IJM}. For each $\boldsymbol{\nu}\in \widehat{T}$ we denote by
\[\chi_{\boldsymbol{\nu}}(t):=t^{\boldsymbol{\nu}}=e^{i\,\langle \boldsymbol{\nu},\,\boldsymbol{\theta}\rangle}\,,\qquad t\in T,\,\boldsymbol{\theta}\in \mathbb{R}^n\text{ and }\boldsymbol{\nu}\in\mathbb{Z}^n\]
the corresponding character; $\langle\cdot,\,\cdot \rangle$ denotes the standard scalar product on $i\,\mathbb{R}^n\cong \mathfrak{t}$. Here, we write $t$ the matrix exponential $e^{i\,\boldsymbol{\theta}}$, where $\boldsymbol{\theta}\in \mathbb{R}^n$. 

Fix $\boldsymbol{\nu}\in \widehat{T}$, let $\Pi_{\boldsymbol{\nu}}:L^2(X)\rightarrow H(X)_{\boldsymbol{\nu}}$ 
be the corresponding equivariant Szeg\H{o} projection; we also denote by $\Pi_{\boldsymbol{\nu}}\in \mathcal{C}^\infty (X\times X)$ its distributional kernel:
\[
	\Pi_{\boldsymbol{\nu}}(x,\,y):= \sum_{j=1}^{d_{\boldsymbol{\nu}}} s^{\boldsymbol{\nu}}_j(x)\cdot \overline{s^{\boldsymbol{\nu}}_j(y)}
\]
where $\{s^{\boldsymbol{\nu}}_j\}_{j=1}^{d_{\boldsymbol{\nu}}}$ is an orthonormal basis of $H(X)_{\boldsymbol{\nu}}$. It is given explicitly by the formula:
\[ \Pi_{\boldsymbol{\nu}}(x,\,y)=\int_T \overline{\chi_{\boldsymbol{\nu}}(t)}\,\Pi(\tilde{\mu}_{t^{-1}}(x),\,y)\,\mathrm{dV}_T(t) \label{eq:equivsz}
\]
where $\Pi$ is the  Szeg\H{o} kernel, $\mathrm{dV}_T$ is the Haar measure and $\chi_{\boldsymbol{\nu}}$ is the character of the representation $\boldsymbol{\nu}$. In~\cite{pao-IJM}, the dimensions of isotypes $H(X)_{k\boldsymbol{\nu}}$ and the behavior of $\Pi_{k\boldsymbol{\nu}}(x,\,x)$ are studied as $k\rightarrow +\infty$. In particular if $\boldsymbol{0}\notin \Phi(M)$ and $\Phi$ is transversal to the ray $i\mathbb{R}_+\cdot \boldsymbol{\nu}$ the asymptotic of $\Pi_{k\boldsymbol{\nu}}(x,\,x)$ is rapidly decreasing whenever $x\notin X_{\boldsymbol{\nu}}:=\pi^{-1}(M_{\boldsymbol{\nu}})$, where $M_{\boldsymbol{\nu}}:=\Phi^{-1}(i\,\mathbb{R}_+\cdot\boldsymbol{\nu})$, as $k$ goes to infinity. In \cite[Theorem 2]{pao-IJM} point-wise expansions and scaling asymptotics are considered at $x\in X_{\boldsymbol{\nu}}$. 

If $\boldsymbol{0}\notin \Phi(M)$ then the dimension of isotype $H(X)_{k\boldsymbol{\nu}}$ is finite. There is a canonical map $$\widetilde{\varphi_{k\boldsymbol{\nu}}}\,:\,X\rightarrow H_{k\boldsymbol{\nu}}(X)^{\vee}$$ given by $$x\mapsto \widetilde{\varphi_{k\boldsymbol{\nu}}}(x)=\Pi_{k\boldsymbol{\nu}}(x,\,\cdot)\,.$$ Let us consider an orthonormal basis of $H_{k\boldsymbol{\nu}}(X)$ given by  ${s}_1^{k\boldsymbol{\nu}},\,\dots,\,{s}^{k\boldsymbol{\nu}}_{d_{k\boldsymbol{\nu}}}$ with respect to the standard $L^2$ product. Now, as a consequence of the scaling asymptotics of the equivariant Szeg\H{o} kernel we can investigate geometric properties of the map
\[\widetilde{\varphi_{k\boldsymbol{\nu}}}\,:\, X_{{\boldsymbol{\nu}}} \rightarrow H_{k\boldsymbol{\nu}}(X)^{\vee}\,,\quad x \mapsto  \Pi_{k\boldsymbol{\nu}}(x,\,\cdot)=\left({s}_1^{k\boldsymbol{\nu}}(x),\,\dots,\,{s}^{k\boldsymbol{\nu}}_{d_{k\boldsymbol{\nu}}}(x)\right)\,, \]
as $k$ goes to infinity, where we write the image $\widetilde{\varphi_{k\boldsymbol{\nu}}}(x)$ in components with respect to the dual basis. 

An equivariant map $R \rightarrow S$ between two ${T}$-spaces descends to a map $R/{T}\rightarrow S/{T}$. Thus, $\widetilde{\varphi_{k\boldsymbol{\nu}}}$ is ${T}$-equivariant and it descends to a map ${\varphi_{k\boldsymbol{\nu}}}$ between the corresponding quotients:
\[\varphi_{k\boldsymbol{\nu}}\,:\, N_{\boldsymbol{\nu}} \rightarrow \mathbb{CP}^{d_{k\boldsymbol{\nu}}-1}_{\chi_{k\boldsymbol{\nu}}}\,, \]
here we denote by $N_{\boldsymbol{\nu}}$ the conic transform $X_{\boldsymbol{\nu}}/{T}$, see \cite{paopolorb}. Furthermore we shall recall in Section \ref{sec:kahlerN} from \cite{paopolorb} that it has a K\"ahler structure $\eta_{\boldsymbol{\nu}}$. The target space is given by taking the quotient of $H_{k\boldsymbol{\nu}}(X)^{\vee}\setminus \{\boldsymbol{0}\}$ with respect to the action induced by the representation
\[
	{s}\left(\tilde{\mu}_{t^{-1}}(x)\right)= {\chi}_{k\boldsymbol{\nu}}(t)\,{s}(x)
\]
where ${\chi}_{k\boldsymbol{\nu}}\,:\,{T}\rightarrow S^1$ is the character. Consider the symplectic form on 
$\mathbb{C}^{d_k}\setminus \{\boldsymbol{0}\}\cong H_{k\boldsymbol{\nu}}(X)^{\vee}\setminus \{\boldsymbol{0}\}$:
\[ \tilde{\omega}_{d_k}:=\frac{i}{2}\,\partial\overline{\partial}\log \lvert\zeta \rvert^2 \,.\]
The restriction of the action of ${T}$ on the unit sphere in $(\mathbb{C}^{d_k}\setminus \{\boldsymbol{0}\},\, \tilde{\omega}_{d_k})$ has the same orbit of the standard $S^1$ action on $S^{d_k-1}$, $\tilde{\omega}_{d_k}$ descends to the Fubini Study form $\omega_{FS,\,k}$ on $\mathbb{CP}^{d_{k\boldsymbol{\nu}}-1}_{\chi_{k\boldsymbol{\nu}}}$.

Let us denote by ${T}_x$ the stabilizer of a point $x\in X_{\boldsymbol{\nu}}$. The cardinality $\lvert {T}_x\rvert$ need not be constant on $X_{\boldsymbol{\nu}}$, but it does attain a generic minimal value $\lvert {T}_{\mathrm{min}}^{\boldsymbol{\nu}}\rvert$ on some dense open subset $X_{\boldsymbol{\nu}}'$, where ${T}_{\mathrm{min}}^{\boldsymbol{\nu}} \subseteq {T}$ is the stabilizer of each point in $X_{\boldsymbol{\nu}}'$ (Corollary $B.47$ of~\cite{ggk}). Then ${T}_{\mathrm{min}}^{\boldsymbol{\nu}}$ stabilizes every $x\in X_{\boldsymbol{\nu}}$. Suppose that $x\in X_{\boldsymbol{\nu}}\setminus X_{\boldsymbol{\nu}}'$ and consider the projection $p\,:\,X_{\boldsymbol{\nu}}\rightarrow N_{\boldsymbol{\nu}}$, then $p(x)$ is a singular point in $(N_{\boldsymbol{\nu}})_{\mathrm{sing}}\subseteq N_{\boldsymbol{\nu}}$.

If $k$ is sufficiently large, we shall prove that $\varphi_{k\boldsymbol{\nu}}$ is a asymptotically rescaled K\"ahler embedding, in a sense specified below. By making use of a variant of \cite[Theorem 4]{pao-IJM} we can prove the following theorem, which is an analogue of embedding theorems in K\"ahler geometry, see \cite{z2}. For terminology concerning orbifolds, see Section \ref{sec:orb}.

\begin{thm} \label{cor:aimi}
	Assume that $\boldsymbol{\nu}\neq \boldsymbol{0}$ is coprime, $\boldsymbol{0}\notin \Phi(M)$, ${T}_{\mathrm{min}}^{\boldsymbol{\nu}}$ is trivial and $\Phi$ is transversal to $i\,\mathbb{R}_+\cdot \boldsymbol{\nu}$. Then there exists a subsequence $k_n$ such that the maps $\varphi_{k_n\boldsymbol{\nu}}$ form an asymptotic sequence of K\"ahler embeddings of the K\"ahler orbifold $(N_{\boldsymbol{\nu}},\,\eta_{\boldsymbol{\nu}})$ into $(\mathbb{CP}^{d_{k\boldsymbol{\nu}}-1}_{\chi_{k\boldsymbol{\nu}}},\,\omega_{FS,k})$ in the following sense: 
	\begin{itemize}
		\item[(1)] $\varphi_{k_n\boldsymbol{\nu}}$ is an \textit{orbifold embedding} if $n$ is sufficiently large. 
		\item[(2)] Uniformly on compact subsets of the smooth locus $N_{\boldsymbol{\nu}}\setminus(N_{\boldsymbol{\nu}})_{\mathrm{sing}}$, there exists $\epsilon>0$ such that $\varphi_{k_n\boldsymbol{\nu}}$ satisfies:
		$${\varphi_{k\boldsymbol{\nu}}}^{*}(\omega_{FS,k})=\eta_{\boldsymbol{\nu}}+O(k^{-\epsilon})\,.$$ 
		%\item[(3)] it defines asymptotic minimal immersion into spheres:	$$\Delta_{g_2} {\varphi_{k\boldsymbol{\nu}}}(x)=-(2d-2r_G+2)\,{\varphi_{k\boldsymbol{\nu}}}(x)+O(k^{-\epsilon})\,.$$
	\end{itemize}
\end{thm}

%In \cite{p-new} scaling asymptotics were investigated by making use of the Kirillov character formula. In fact, Theorem \ref{thm:tang} is a simple variant of the main Theorem in \cite{p-new}.

Theorem~\ref{thm:tang} is based on micro-local techniques that work in the almost complex symplectic setting, see~\cite{sz}. Following similar idea as in~\cite{sz}, we study off-locus asymptotic of the projector $\Pi_{k\boldsymbol{\nu}}$ for arbitrary displacement as $k$ goes to infinity, we refer to \cite{pao-IJM} and Section \ref{sctn:szego fio}.

%Eventually we recall that, motivated by K\"ahler geometry, various authors studied canonical isometric embeddings of Riemannian manifolds into $\mathbb{S}^{k-1}$ or $\mathbb{R}^k$, for $k \gg 1$. In 1994, B\'erard, Besson, and Gallot \cite{bbg} constructed an asymptotically isometric embedding of compact Riemannian manifolds $M$ into the space of real-valued and square summable series, see also \cite{z1}, \cite{wz}, \cite{po}. Those approaches are somehow close in spirit with the one developed in this paper, in fact the microlocal expression for Szeg\H{o} kernel of a CR manifold $X$ is closely related to the heat kernel of the Kohn Laplacian on it, see the proof in \cite{hsiao}.

\section{Motivations and preliminaries}
\label{sec:premot}

Given a compact Hodge manifold $(M,\omega)$ the sections of the quantizing line bundle $L$ can be used to define an embedding ${\phi}_k\,:\, M\rightarrow \mathbb{CP}^{d_k-1}$ into the complex projective space $(\mathbb{CP}^{d_k-1},\,\omega_{FS})$ for $k$ large, where $\omega_{FS,k}$ is the Fubini-Study form and $d_k$ is the dimension of the space of holomorphic sections of the $k$-th tensor power of the line bundle $L$. These spaces of sections can be naturally identified with spaces of functions on $X$ lying in the $k$-th Fourier component of the Hardy space.  The map $\phi_k$ can be studied by using the Szeg\H{o} kernel, in fact its $k$-th Fourier component defines, for $k$ sufficiently large, a map $$\widetilde{\phi}_k\,:\,X\rightarrow H_k(X)^{*}\setminus\{\boldsymbol{0}\}\,,\quad x\mapsto \Pi_k(x,\,\cdot)\,,$$ 
which in turn define a map
\[ \phi_k\,:\,M\rightarrow \mathbb{P}(H_k(X)^{*})\,. \]
It is natural to ask how the pull-back of $\omega_{FS}$ on $M$ is related to $\omega$. S. Zelditch and D. Catlin strengthen a previous result of Tian by proving that $(1/k)\,{\phi_k}^*(\omega_{FS})$ converges to $\omega$ in the smooth topology, see \cite{cat} and \cite{z2}. Furthermore, B. Shiffman and S. Zelditch generalize this picture in the symplectic setting in~\cite{sz}. In this setting, small displacements from a fixed $x\in X$ are conveniently expressed
in Heisenberg local coordinates on $X$ centered at $x$. A choice of Heisenberg local coordinates at $x$ gives a meaning to the expression $x+ \mathbf{v}$, where $\mathbf{v} \in T_{\pi(x)}M$ has sufficiently small norm. By using re-scaling asymptotic expansion
\[k^{-d}\,\Pi_k\left(x+\frac{\mathbf{v}}{\sqrt{k}},\,x+\frac{\mathbf{w}}{\sqrt{k}} \right)\sim\frac{1}{\pi^d}e^{\mathbf{v}\cdot \mathbf{w}-\frac{1}{2}(\lVert \mathbf{v}\rVert^2+\lVert \mathbf{w}\rVert^2)}\left[1+\sum_{j\geq 1} k^{-j/2}a_j(x,\, \mathbf{v},\,\mathbf{w}) \right] \]
where $a_j(x,\, \mathbf{v},\,\mathbf{w})$ are polynomial functions in $\mathbf{v}$ and $\mathbf{w}$ and $\mathbf{v},\,\mathbf{w}\in T_{m_x}M$, they prove the convergence of the rescaled pull-backs of the Fubini Study forms. Here, $\sim$ means ``as the same asymptotic as''.
Furthermore in \cite{gh}, we obtain an analogue Kodaira embedding theorem and Tian's almost-isometry theorem for compact quantizable pseudo-K\"ahler manifold making use of the asymptotic expansion of the Bergman kernels for $L^{\otimes k}$-valued $(0,q)$-forms.

In this paper we shall focus on  Szeg\H{o} kernel asymptotics along rays in weight space and we aim to obtain analogue of results we have discussed in this section, we shall now briefly review the literature. In \cite{pao-IJM} the author focuses on asymptotic expansions locally reflecting the equivariant decomposition of $H(X)$ over the irreducible representations of a compact torus. In \cite{gp} and \cite{gp-asian} the focus was on respectively the case $G=U(2)$ and $G=SU(2)$ making use of the Weyl integration formula; in \cite{p-new} local scaling asymptotics were generalized to the case of a connected compact Lie group $G$. Furthermore, we refer to \cite{circle} where asymptotics of the associated Szeg\H{o} and Toeplitz operators were studied in detail for the case of Hamiltonian circle action lifting to the quantizing line bundle, see \cite{g} for some related results concerning the torus action case. It is important to notice that, in general, the spaces $H(X)_{k\boldsymbol{\nu}}$ are not contained in any space of global sections $H^0(M,A^{\otimes k})$. 

The analog of ``quantization commutes with reduction'' for torus action in this framework was first studied in \cite{paopolorb}. In \cite{g2} we generalize the main theorem in \cite{paopolorb} to compact “quantizable” pseudo-Kähler manifolds.

\subsection{Definitions concerning orbifolds}
\label{sec:orb}

We recall basic definitions we need about orbifolds, for a more precise discussion see \cite{s}, \cite{alr}, \cite{mm} and references therein. We define the category $\mathcal{M}_s$ in the following way. The objects are pairs $(U, G)$, where $U$ is a connected smooth manifold and $G$ is a finite group acting smoothly on $U$. A morphism $\Phi\,:\,(U,G)\rightarrow (U',G')$ is family of open embeddings $\varphi:\,U\rightarrow U'$ such that:
\begin{itemize}
	\item[i)] For each $\varphi\in \Phi$ there exists an injective group homomorphism $\lambda_{\varphi}: G\rightarrow G'$ satisfying
\[\varphi(g\cdot x)=\lambda_{\varphi}(g)\cdot \varphi(x),\qquad x\in U,\,g\in G \,. \]
	\item[ii)] For $g\in G'$, $\varphi\in \Phi$, if $(g\,\varphi)(U)\cap \varphi(U)\neq \emptyset$, then $g\in \lambda_{\varphi}(G)$.
	\item[iii)] For $\varphi\in \Phi$, we have $\Phi=\{g\,\varphi,\,g\in G'\}$.
\end{itemize} 

Let $X$ be a paracompact Hausdorff space and let $\mathcal{U}$ be a covering of $X$ consisting of connected open subsets. We assume $\mathcal{U}$ satisfies the condition: for any $x\in U\cap V$, $U,\,V\in\mathcal{U}$, there is $W\in \mathcal{U}$ such that $x\in W \subset U\cap V$. Then we say that the topological space $X$ has an orbifold structure if 
\begin{itemize}
	\item[i)] for each $U\in \mathcal{U}$ there exists $(\widetilde{U},G_U)\in \mathcal{M}_s$ and a ramified covering $(\widetilde{U},G_U)\rightarrow U$ such that $U$ is homeomorphic to $\widetilde{U}/G_U$, where $\widetilde{U}/G_U$ is endowed with the quotient topology;  
	\item[ii)] for any $U,\,V \in \mathcal{U}$, $U\subset V$, there exists a morphism $\varphi_{V,U}: (\widetilde{U},G_U) \rightarrow (\widetilde{V},G_V)$ which covers the inclusion  $U\subset V$ and satisfies $\varphi_{WU}= \varphi_{WV}\circ \varphi_{VU}$ for any $U,\,V,\,W \in \mathcal{U}$ such that $U\subset V \subset W$.
\end{itemize}

Let us denote by $\pi_U\,:\, (\widetilde{U},G_U)\rightarrow U$ the ramified covering. We recall that for each $p\in X$ one can always find local coordinates $x=(x_1,\,\dots,\,x_n)$ on $(\widetilde{U}, G_U)$, $\widetilde{U}\subseteq \mathbb{R}^n$, and $\tilde{p}$ with $\pi_{U}(\tilde{p})=p$ such that $x(\tilde{p})\equiv 0\in \mathbb{R}^n$ is a fixed point of $G_U$ and $G_U$ acts linearly on $\mathbb{R}^n$. We call these coordinates \textit{standard coordinates} at $p$. If $\vert G_U\rvert > 1$ then we say that $p$ is a singular point, otherwise we say that $p$ is regular. 

%There exists an open and dense subset in $X$ on which all the points have conjugated stabilizers, thus if the action of $G$ on $M$ in the definition of $\mathcal{M}_s$ is assumed to be effective, then set of regular points is open and dense in $X$. 

%\begin{exmp} Consider the action of $(\mathbb{Z}_2,+)$ on $\mathbb{R}^3$ given by \[[0]_2\cdot (x_1,x_2,x_3)= (x_1,x_2,x_3),\quad [1]_2\cdot (x_1,x_2,x_3)= (x_1,x_2,-x_3) \, \] and the action of $(\mathbb{Z}_3,+)$ is \[[1]_3\cdot (x_1,x_2, x_3)= \left(x_1,-\frac{1}{2}x_2 - \frac{\sqrt{3}}{2}x_3,-\frac{1}{2}x_3 + \frac{\sqrt{3}}{2}x_2 \right) \, \] and $[2]_3\cdot x = [1]_3\cdot [1]_3 \cdot x$. Consider the three dimensional sphere $S^3$, the the north pole $N$, the south pole $S$ and the corresponding stereographic projections \[\mathfrak{s}_N\,: S^3\setminus \{N\} \rightarrow \mathbb{R}^3,\quad \mathfrak{s}_S\,: S^3\setminus \{S\} \rightarrow \mathbb{R}^3 \]  \end{exmp}

A \textit{vector orbibundle} $(E,p)$ on an orbifold $X$ is an orbifold $E$ together with a continuous projection $p:E\rightarrow X$ such that for $U\in \mathcal{U}$ there exists a $G_U$-equivariant lift $\widetilde{p}\,: \widetilde{E}_{U}\rightarrow \widetilde{U}$ defining a trivial vector bundle such that the diagram
\begin{align*}
	(\widetilde{E}_U,&G^E_U) \xrightarrow{\widetilde{p}} (\widetilde{U}, G_U)   \\
	\downarrow& \qquad\qquad \downarrow \\
	E_U& \quad\,\,\,\,\xrightarrow{p}  \quad U
\end{align*}
commutes. As before, when we say that $\widetilde{p}\,: \widetilde{E}_{U}\rightarrow \widetilde{U}$ is $G_U$-equivariant we mean that there exists an injective group homomorphism $\lambda_{p}: G^E_U \rightarrow G_U$ such that
\[\widetilde{p}(g\cdot e)=\lambda_{p}(g)\cdot \widetilde{p}(e)\qquad e\in \widetilde{E}_U,\,g\in G_U^E \,;\]
if $\lambda_{p}$ is a group isomorphism, we say that the vector orbibundle is \textit{proper}.

A smooth section of a vector orbibundle $(E,p)$ over $X$ is a smooth map $s: X \rightarrow E$ such that $p\circ s = 1_X$, we denote the space of smooth sections as $\mathcal{C}^\infty(X, E)$. We recall that a \textit{smooth map} $f:X\rightarrow Y$ between orbifolds is a continuous map between the underlying topological spaces such that for each $U\in \mathcal{U}$ there exists a local lift $(\widetilde{f}_U, \overline{f}_U)$ such that $\widetilde{f}_U : \widetilde{U}\rightarrow \widetilde{V}$ is smooth, $\overline{f}_U: G_U\rightarrow G_V$ is a group homomorphism, the diagram
\begin{align*}
	(\widetilde{U},& G_U) \xrightarrow{\widetilde{f}} (\widetilde{V},G_V) \\
	\downarrow& \qquad\qquad \downarrow \\
	U& \quad\,\,\,\,\xrightarrow{f} \quad V
\end{align*}
commutes and $\widetilde{f}_U$ is $\overline{f}_U$-equivariant:
\[\widetilde{f}_U(g\cdot x)= \overline{f}_U(g)\widetilde{f}_U(x)\,,\qquad g\in G_U,\,x\in\widetilde{U}\,. \]

The tangent orbibundle $TX$ is defined by gluing together the bundles defined over the charts 
\[(T\widetilde{U},\,G_U) \rightarrow (\widetilde{U},\,G_U)\,,  \]
where the action of $G_U$ on $T_{\widetilde{x}}\widetilde{U}$ is induced by differentiating the action on $\widetilde{U}$, $\mathrm{d}g$. It has a natural structure of proper vector orbibundle. The smooth sections of it are called vector fields.

We can then define the \textit{differential} of a smooth map $f : X \rightarrow Y$ to be the smooth map $\mathrm{d}f: TX \rightarrow TY$ such that locally for a given $x\in U$, $f(x)\in V$, and \textit{standard coordinates} at $x$ we have a commutative diagram
\begin{align*}
	(T_{\widetilde{x}}\widetilde{U},& G_U) \xrightarrow{\mathrm{d}_{\widetilde{x}}\widetilde{f}} (T_{\widetilde{f}(\widetilde{x})}\widetilde{V},G_V) \\
	\downarrow& \qquad\qquad \,\,\,\downarrow \\
	T_xU& \quad\,\,\,\,\xrightarrow{\mathrm{d}_xf} \,\, T_{f(x)}V
\end{align*}
where $\mathrm{d}_{\widetilde{x}}\widetilde{f}$ is $\overline{f}_U$-equivariant. We say that $f$ is an \textit{immersion} at $x \in X$ if $\mathrm{d}f$ is injective, so this means that there is a local lift $\widetilde{f}_U : \widetilde{U} \rightarrow \widetilde{V}$ such that both $\mathrm{d}_{\widetilde{x}}\widetilde{f}$ and $\overline{f}_U$ are injective. 

\subsection{Conic transforms for torus actions}
\label{sec:kahlerN}

We work with the same assumptions as in \cite[Basic Assumption 1.1]{paopolorb}. Let us recall that $M_{\boldsymbol{\nu}}=\Phi^{-1}(i\mathbb{R}_+\cdot\boldsymbol{\nu})$ and for each $m\in M_{\boldsymbol{\nu}}$ we have
\[\Phi(m)=\varsigma(m)\,i\,\boldsymbol{\nu}\,,\]
where $\varsigma\,:\,M\rightarrow \mathbb{R}$ is a suitable smooth positive function.
Here, we always assume that the action of $T$ on $X_{\boldsymbol{\nu}}$ is locally free. By \cite{paopolorb}, transversality is tantamount to $T$ acting locally freely on $X_{\boldsymbol{\nu}}$ so that $N_{\boldsymbol{\nu}}$ is an orbifold.

In \cite[Section 5.3]{paopolorb} a K\"ahler orbifold structure for $N_{\boldsymbol{\nu}}$ is defined in the following way. Let us denote by $i\,\boldsymbol{\nu}^0$ the annihilator of $i\,\boldsymbol{\nu}$ in $\mathfrak{t}$. Consider the quotient $Y_{\boldsymbol{\nu}}:=X_{\boldsymbol{\nu}}/\exp_T(i\,\boldsymbol{\nu}^0)$, then $Y_{\boldsymbol{\nu}}$ is as principal $S^1$ orbibundle over $N_{\boldsymbol{\nu}}$, let $\pi^{\boldsymbol{\nu}}$ be the projection. Adopting the same notation as in \cite[Section 5.3]{paopolorb}, $\Phi\circ \pi$ descends to a smooth function $\overline{\Phi}\,:\,Y_{\boldsymbol{\nu}}\rightarrow \mathfrak{t}^{\vee}$; hence $\Phi^{\boldsymbol{\nu}}=\langle \Phi,\,i\,\boldsymbol{\nu}\rangle$ descends to a smooth function $\overline{\Phi}^{\boldsymbol{\nu}}\,:\,Y_{\boldsymbol{\nu}}\rightarrow \mathbb{R}$. 

Let $\alpha^{X_{\boldsymbol{\nu}}}:=j_{\boldsymbol{\nu}}^*(\alpha)$, where $j_{\boldsymbol{\nu}}\,:\,X_{\boldsymbol{\nu}}\rightarrow X$ is the inclusion. Then, $\alpha^{X_{\boldsymbol{\nu}}}$ is ${T}$-invariant, and by definition of $X_{\boldsymbol{\nu}}$ for any $\boldsymbol{\xi}\in \boldsymbol{\nu}^{0}$ we have $\iota((i\,\boldsymbol{\xi})_{X_{\boldsymbol{\nu}}})\alpha^{X_{\boldsymbol{\nu}}}=0$, see \cite{paopolorb}. Hence $\alpha^{X_{\boldsymbol{\nu}}}$ is the pullback of an orbifold 1-form $\alpha^{Y_{\boldsymbol{\nu}}}$ on $Y_{\boldsymbol{\nu}}$. Let us define
\[
	\beta_{\boldsymbol{\nu}}:=  \frac{\lVert\boldsymbol{\nu} \rVert^2}{\overline{\Phi}^{\boldsymbol{\nu}}}\, \alpha^{Y_{\boldsymbol{\nu}}}\,.
\]
 
By \cite[Equation (46)]{paopolorb}, we see that
\[\mathrm{d}\beta_{\boldsymbol{\nu}}=\lVert \boldsymbol{\nu} \rVert^2\, \left[\frac{1}{\overline{\Phi}^{\boldsymbol{\nu}}}\,\mathrm{d}\alpha^{Y_{\boldsymbol{\nu}}}-\frac{1}{(\overline{\Phi}^{\boldsymbol{\nu}})^2}\,\mathrm{d}\overline{\Phi}^{\boldsymbol{\nu}}\wedge \alpha^{Y_{\boldsymbol{\nu}}}\right]\,. \]

By \cite[Corollary 16]{paopolorb} and \cite[Lemma 11]{paopolorb}:
\begin{prop}
	There exists a $2$-form $\eta_{\boldsymbol{\nu}}$ on $N_{\boldsymbol{\nu}}$ such that $\mathrm{d}\beta_{\boldsymbol{\nu}}=2\pi^*_{\boldsymbol{\nu}}(\eta_{\boldsymbol{\nu}})$ and a complex structure $J^{N_{\boldsymbol{\nu}}}$ so that $(N_{\boldsymbol{\nu}},\,J^{\boldsymbol{\nu}},\,\eta_{\boldsymbol{\nu}})$ is a K\"ahler orbifold.
\end{prop}

First, we introduce a decomposition of $T_{m}M$ when $m\in M_{\boldsymbol{\nu}}$, as in \cite{pao-IJM}. We adopt the following notation. Let $\mathfrak{l}\leq \mathfrak{t}$ be a Lie subalgebra of $\mathfrak{t}$, we denote by $\mathfrak{l}_M(m)$ the space of infinitesimal vector fields $\boldsymbol{\xi}_M(m)$ on $M$ at $m$, $\boldsymbol{\xi}\in \mathfrak{l}$. Explicitly,
\[\mathfrak{l}_M(m)=\{\boldsymbol{\xi}_M(m)\in T_mM\,:\, \boldsymbol{\xi} \in \mathfrak{l}\}\,. \]
Similarly we denote by $\mathfrak{l}_X(x)$ the space of infinitesimal vector fields $\boldsymbol{\xi}_X(x)$ on $X$ at $x$, $\boldsymbol{\xi}\in \mathfrak{l}$. Under the transversality assumption, for each $m\in M_{\boldsymbol{\nu}}$ we have
\begin{equation} T_mM= T^{\mathrm{t}}_mM\oplus T^{\mathrm{v}}_mM \oplus T^{\mathrm{hor}}_mM\,, \label{eq:decomposition} 
\end{equation}
where
\[T^{\mathrm{t}}_m M:=J_m\,(i\boldsymbol{\nu}^{0})_M(m)\,, \qquad T^{\mathrm{v}}_mM:=(i\boldsymbol{\nu}^{0})_M(m)\,\]
and
\[T^{\mathrm{hor}}_mM := \left(T^{\mathrm{v}}_mM \oplus T^{\mathrm{t}}_mM \right)^{\perp_{g}}\,. \]

By the proof of \cite[Lemma 11]{paopolorb}, see \cite[Equation (48)]{paopolorb}, we have the following lemma.

\begin{lem} \label{lem:2.1} Let $Q_{\boldsymbol{\nu}}\,:\,X_{\boldsymbol{\nu}}\rightarrow Y_{\boldsymbol{\nu}}$ be the projection. For each $x\in X_{{\boldsymbol{\nu}}}$, with $\pi(x)=m$, and for each $\mathbf{v},\mathbf{w}\in T_{m}^{\mathrm{hor}}M$, we have
	\[Q^*_{\boldsymbol{\nu}}(\mathrm{d}\beta_{\boldsymbol{\nu}})_x(\mathbf{v}^{\sharp},\,\mathbf{w}^{\sharp})=\frac{2}{\varsigma(m)}\,\pi^*\omega_m(\mathbf{v}^{\sharp},\,\mathbf{w}^{\sharp})\,.\]	
\end{lem}

\subsection{Equivariant Szeg\H{o} kernels}
\label{sctn:szego fio}

Let $\Pi\,:\,L^2(X)\rightarrow H(X)$ be the Szeg\H{o} projector,  $\Pi(\cdot,\cdot)$ its kernel. By~\cite{bs}, $\Pi$ is a Fourier integral operator with complex phase:
\[
	\Pi(x,y)=\int_0^{+\infty}e^{iu\psi(x,y)}\,s(x,y,u)\,\mathrm{d}u,
\]
where the imaginary part of the phase satisfies $\Im (\psi)\ge 0$ and 
$$
s(x,y,u)\sim \sum_{j\ge 0}u^{d-j}\,s_j(x,y).
$$
We shall also make use of the description of the phase $\psi$ in Heisenberg local coordinates (see \S 3 of~\cite{sz}).

The Heisenberg coordinates for the unit circle bundle $X$ at a given point $x$, with $\pi(x)=m$, are defined in Section~\S$1.2$ in~\cite{sz}. The Heisenberg coordinates play a crucial role in the description of the scaling asymptotics of the Szeg\H{o} kernel in \cite{sz}. Here, we will only recall that to define them one needs to choose \textit{preferred local coordinates} $z=(z_1,\dots,z_d)$ centered at a point $m\in M$ and choose a \textit{preferred local frame} for the line bundle $L$. Recall that the coordinates $(z_1,\dots,z_d)$ are preferred at $m\in M$ if and only if
\[\sum_{j=1}^d \mathrm{d}z_j\otimes \mathrm{d}\overline{z}_j = (g-i\,\omega)_{\vert m}\,, \]
where $g$ is the Riemannian metric $\omega(\cdot\,,\, J\cdot)$. Furthermore, a preferred frame for $L \rightarrow M$ at a point $m \in M$ is a local frame $e_L$ in a neighborhood of $m$ such that
\[(\lVert e_L \rVert_{h_L})_{\vert m}=1\,,\quad \nabla (e_L)_{\vert m}=0\,,\]
and
\[\nabla^2(e_L)_{\vert m}=-(g+i\,\omega)\otimes (e_L)_{\vert m}\,. \]

The preferred frame and preferred coordinates together give us the Heisenberg local coordinates on the circle bundle $X$: a Heisenberg coordinate chart at a point $x$ ($m=\pi(x)$) is a coordinate chart $\rho\,:\,U\approx V$ with $0\in U\subset \mathbb{C}^d\times \mathbb{R}$ (denote by $p_{\mathbb{C}^d}\,:\,\mathbb{C}^d\times \mathbb{R}\rightarrow \mathbb{C}^d$ the projection) and $\rho(0)=x\in V\subset X$ of the form
\[\rho(\theta, z_1,\dots,z_d)=e^{i\theta}\,a(z)^{-\frac{1}{2}}\,e^*_L(z)\,, \]
where $a$ is a real-valued smooth function on $p_{\mathbb{C}^d}(U)$, $e_L$ is a preferred local frame at $m$ and $(z_1,\dots,z_d)$ are preferred coordinates centered at $m$.

For ease of notation, on a Heisenberg local chart, we write $x+(\theta,\,\mathbf{v})$; here $\theta\in (-\pi,\,\pi)$ and $\mathbf{v}\in B_{2d}(\boldsymbol{0},\,\delta)$, the open ball of center the origin and radius $\delta>0$ in $\mathbb{C}^d\cong \mathbb{R}^{2d}$. If $\mathbf{v}\in T_mM$ and $\lVert \mathbf{v}\rVert< \epsilon$, we write
$x+(0,\,\mathbf{v})=x+\mathbf{v}$.
Furthermore, accordingly with the decomposition \eqref{eq:decomposition}, we write $\mathbf{v}=\mathbf{v}_{\mathrm{t}}+ \mathbf{v}_{\mathrm{v}}+ \mathbf{v}_{\mathrm{hor}}$.

Now, we need to state a variant, Theorem \ref{thm:tang} below, of the main result concerning off-diagonal asymptotics of the equivariant Szeg\H{o} kernel obtained in~\cite{pao-IJM} and \cite{cm}. In fact, \cite[Theorem 4]{pao-IJM} concerns off-locus asymptotics expansions along directions in the normal bundle $T^{\mathrm{t}}M$ but one can generalize them for arbitrary displacements as in~\cite{cm}. More precisely, in \cite{cm}, the author considers that a compact and connected Lie group $G$ and a compact torus $T$ act on $M$ in a holomorphic and Hamiltonian manner, that the
actions commute, and linearize to $L$. Given a nonzero integral weight $\nu$ for $T$, the author considers the isotypical components associated to the multiples $k\,\nu_T$, $k\rightarrow +\infty$, and focuses on how their structure as $G$-modules is reflected by certain local scaling asymptotics on $X$. Thus, \cite[Theorem 1.6]{cm} reduces to Theorem \ref{thm:tang} below for trivial $G$.

Choose $\boldsymbol{\xi}_1\in \mathrm{span}(\boldsymbol{\nu})$ so that $\lVert \boldsymbol{\xi}_1\rVert=1$ and $\langle \boldsymbol{\xi}_1,\,\boldsymbol{\nu}\rangle=\lVert \boldsymbol{\nu} \rVert$. Hence, if $v_j=(\theta_j,\,\mathbf{v}_j)\in T_xX$, set \[ \mathbf{A}(v_1,\,v_2):=\frac{\theta_2-\theta_1}{\lVert \Phi(m) \rVert}\,\boldsymbol{\xi}_{1,\,M}(m)\,. \]Define $E\,:\,T_xX_{\boldsymbol{\nu}}\times T_xX_{\boldsymbol{\nu}} \rightarrow \mathbb{C}$ by setting
\[E(v_1,\,v_2):=i\,\omega_m(\mathbf{A}(v_1,\,v_2),\,\mathbf{v}_{1\,h})+\psi_2(\mathbf{v}_{1\mathrm{h}}-\mathbf{A}(v_1,\,v_2),\,\mathbf{v}_{2\,h}) \]
where
\[
\psi_2\left(\mathbf{v}_{1},\,\mathbf{v}_{2} \right) =-i\,\omega_m(\mathbf{v}_1,\,\mathbf{v}_2)-\frac{1}{2}\lVert \mathbf{v}_1-\mathbf{v}_2\rVert^2\,.
\]
Given $v=(\theta,\,\mathbf{v})\in T_xX$, for every $t\in T_x$, we write
\[v^{(t)}=\mathrm{d}_x\tilde{\mu}_{t}(v)=(\theta,\,\mathrm{d}_{m_x}{\mu}_{t}(\mathbf{v}))\,.\]

Now, we can state the theorem concerning off diagonal scaling asymptotics for directions tangent to $X_{\boldsymbol{\nu}}$.

\begin{thm} \label{thm:tang}
	For each $x\in X_{\boldsymbol{\nu}}$, with $\pi(x)=m$, and $v_j=(\theta_j,\,\mathbf{v}_j)\in T_xX_{\boldsymbol{\nu}}$, $j=1,\,2$, such that $\mathbf{v}_j\in T^{\mathrm{hor}}_mM$ and $\lVert \mathbf{v}_i \rVert\leq C\,k^{\epsilon}$, for sufficiently small $\epsilon$, we have
	\begin{align} \label{eq:piknu}
		&\Pi_{k\boldsymbol{\nu}}\left(x+\frac{{v}_{1}}{\sqrt{k}},\,x+\frac{{v}_2}{\sqrt{k}} \right)  \\
		&\qquad\sim  \frac{e^{i\,\sqrt{k}\,(\theta_1-\theta_2)/\varsigma(m)}}{(\sqrt{2}\,\pi)^{n-1}\,\mathcal{D}(m)}\,\left(\lVert \boldsymbol{\nu} \rVert\cdot\frac{k}{\pi}  \right)^{{d}+(1-{n})/2}\cdot\left( \frac{1}{\lVert \Phi(m) \rVert}  \right)^{{d}+1+(1-{n})/2} \notag\\
		&\qquad\quad\cdot  \left(\sum_{t\in T_x}\chi_{\boldsymbol{\nu}}(t)^k\,e^{\frac{1}{\varsigma(m)}\,E\left({v}_1^{(t)},\,{v}_2\right)}\right) \cdot \left(1+\sum_{j\geq 1}R_j(m,\,{v}_1,\,{v}_2)\,k^{-j/2} \right), \notag
	\end{align}
	where $\mathcal{D}$ is a smooth positive function on $M_{\boldsymbol{\nu}}$, $R_j$ is polynomial in the ${v}_i$'s of degree $\leq 3\, j$ and parity $j$.
\end{thm}

The re-scaled asymptotic expansions \eqref{eq:piknu} of Theorem \ref{thm:tang} will be the main tool for proving Theorem~\ref{cor:aimi}.

Let us compare the exponent $E\left({v}_1,\,{v}_2\right)$ with the standard one appearing in \cite{sz}. For ease of notation, we set
\begin{align*}&F_k^{\mu}\left(\left(\frac{\theta_1}{\sqrt{k}},\,\frac{\mathbf{v}_1}{\sqrt{k}}\right),\,\left(\frac{\theta_2}{\sqrt{k}},\,\frac{\mathbf{v}_2}{\sqrt{k}}\right)\right):= i\,\sqrt{k}\,({\theta_1-\theta_2})+E\left({v}_1,\,{v}_2\right)  \\
	&\qquad= i\,\sqrt{k}\,\left[\theta_1\left(1-\omega\left(\frac{\boldsymbol{\xi}_{1,\,M}}{\lVert\Phi(m)\rVert},\,\mathbf{v}_{1,h}\right)\right)-\theta_2\left(1-\omega\left(\frac{\boldsymbol{\xi}_{1,\,M}}{\lVert\Phi(m)\rVert},\,\mathbf{v}_{2,\,h}\right)\right) \right] \\
	&\qquad\quad +\psi_2\left(\mathbf{v}_{1,\,h}+\theta_1\cdot\frac{\boldsymbol{\xi}_{1,\,M}}{\lVert\Phi(m)\rVert},\,\mathbf{v}_{2,\,h}+\theta_2\cdot \frac{\boldsymbol{\xi}_{1,\,M}}{\lVert\Phi(m)\rVert}\right)
\end{align*}
and we note that, when $T=T^1$ acts trivially on $M$, with constant moment map equal to $1$, we recover the exponent of \cite{sz}:
\[F_k^{SZ}\left(\left(\frac{\theta_1}{\sqrt{k}},\,\frac{\mathbf{v}_1}{\sqrt{k}}\right),\,\left(\frac{\theta_2}{\sqrt{k}},\,\frac{\mathbf{v}_2}{\sqrt{k}}\right)\right)=i\,\sqrt{k}\,({\theta_1-\theta_2})+\psi_2\left(\mathbf{v}_1,\,\mathbf{v}_2\right)\,. \]

Given $\theta$ sufficiently small, $x\in X_{\boldsymbol{\nu}}$ and $\mathbf{v}\in T_xX_{\boldsymbol{\nu}}$, we note that by Corollary 2.2 in \cite{pao-IJM} we get
\[\tilde{\mu}_{\exp_T \left(-\theta\,\tilde{\boldsymbol{\xi}}_1\right)}\left(x+\mathbf{v}\right)=x+\left(\theta+\theta\cdot\omega\left(\frac{\boldsymbol{\xi}_{1,\,M}}{\lVert\Phi(m)\rVert},\,\mathbf{v}\right),\,\mathbf{v}-\theta\cdot\frac{\boldsymbol{\xi}_{1,\,M}}{\lVert\Phi(m)\rVert}\right)+O\left(\lVert(\theta,\,\mathbf{v})\rVert^2\right)   \]
and eventually, we note that 
\begin{align*} 
&F_k^{\mu}\left(\left(\frac{\theta_1}{\sqrt{k}},\,\frac{\mathbf{v}_1}{\sqrt{k}}\right),\,\left(\frac{\theta_2}{\sqrt{k}},\,\frac{\mathbf{v}_2}{\sqrt{k}}\right)\right) \\ \notag
& \qquad = F_k^{SZ}\left(\tilde{\mu}_{\exp_T \left(-\frac{\theta_1}{\sqrt{k}}\,{\tilde{\boldsymbol{\xi}}_1}\right)}\left(x-\frac{\mathbf{v}_1}{\sqrt{k}}\right),\,\tilde{\mu}_{\exp_T \left(-\frac{\theta_2}{\sqrt{k}}\,{\tilde{\boldsymbol{\xi}}_1}\right)}\left(x-\frac{\mathbf{v}_2}{\sqrt{k}}\right)\right) +O\left(1/\sqrt{k}\right) 
% \\ & \qquad = F_k^{SZ}\left(\mathfrak{n}\left(\frac{\theta_1}{k},\,\frac{\mathbf{v}_1}{\sqrt{k}}\right),\,\mathfrak{n}\left(\frac{\theta_2}{k},\,\frac{\mathbf{v}_2}{\sqrt{k}}\right)\right) +O\left(1/\sqrt{k}\right)\notag
\end{align*}
where
\[ \tilde{\boldsymbol{\xi}}_1=\frac{\boldsymbol{\nu}}{\lVert \boldsymbol{\nu} \rVert\cdot\lVert \Phi(m)\rVert}\,. \]

\section{Proof of Theorem~\ref{cor:aimi}}
\label{sec:proof}

Let us denote the dimension of $H_{k\boldsymbol{\nu}}(X)$ by $d_{k\boldsymbol{\nu}}$ and consider an orthonormal basis of $H_{k\boldsymbol{\nu}}(X)$ given by  ${s}_1^{k\boldsymbol{\nu}},\,\dots,\,{s}^{k\boldsymbol{\nu}}_{d_{k\boldsymbol{\nu}}}$ with respect to the standard $L^2$ product. 

We distinguish two cases: the stabilizer of $x$ is trivial or its cardinality is finite and equals $\lvert T_x\rvert>1$. First, let us consider a neighborhood of a point $x$ with $\lvert T_x\rvert = 1$.

Now, by definition, we can write in coordinates with respect to the dual basis 
\[\widetilde{\varphi_{k\boldsymbol{\nu}}}\,:\, X_{{\boldsymbol{\nu}}} \rightarrow H_{k\boldsymbol{\nu}}(X)^{\vee}\,,\quad x \mapsto  \left({s}_1^{k\boldsymbol{\nu}}(x),\,\dots,\,{s}^{k\boldsymbol{\nu}}_{d_{k\boldsymbol{\nu}}}(x)\right)\,, \]
%is explicitly given by
%\[\widetilde{\varphi_{k\boldsymbol{\nu}}}(x)=\sum_{j=1}^{d_{k\boldsymbol{\nu}}} s_{j}^{k\boldsymbol{\nu}}(x)(s_{j}^{k\boldsymbol{\nu}}) \in H_{k\boldsymbol{\nu}}(X)^*\cong\mathbb{C}^{d_{k\boldsymbol{\nu}}}\,\]
and we note that
\begin{equation} \label{phipi}
	\Pi_{k\boldsymbol{\nu}}(x,\,y):=
	\widetilde{\varphi_{k\boldsymbol{\nu}}}(x)\cdot \overline{\widetilde{\varphi_{k\boldsymbol{\nu}}}(y)}\,.
\end{equation}

We shall follow \cite{sz} and we consider the 2-form $\Omega$ on a sufficiently small neighborhood of the diagonal $(\mathbb{C}^{d_{k\boldsymbol{\nu}}}\setminus \{0\})\times (\mathbb{C}^{d_{k\boldsymbol{\nu}}}\setminus \{0\})$ given by
\[ \Omega=\frac{i}{2}\,\mathrm{d}^{(1)}\mathrm{d}^{(2)}\log \zeta\cdot \overline{\eta}  \]
where $\zeta$ and $\eta$ are the variables respectively on the first and second component of $$(\mathbb{C}^{d_{k\boldsymbol{\nu}}}\setminus \{0\})\times (\mathbb{C}^{d_{k\boldsymbol{\nu}}}\setminus \{0\})$$ and the formal differentials $\mathrm{d}^{(1)}$ and $\mathrm{d}^{(2)}$ denote the exterior differentiation on the first and second factors, respectively. On the diagonal of $(\mathbb{C}^{d_k}\setminus \{0\})\times (\mathbb{C}^{d_k}\setminus \{0\})$, we have
\[ j^*(\Omega)=\tilde{\omega}_{d_k}:=\frac{i}{2}\,\partial\overline{\partial}\log \lvert\zeta \rvert^2 \,\]
where $j$ is the diagonal embedding.
%where $\tilde{\omega}_{d_k}$ induces the Fubini-Study form on $\mathbb{CP}^{d_k-1}$. 

Furthermore define
\[\Phi_{k\boldsymbol{\nu}}=\widetilde{\varphi_{k\boldsymbol{\nu}}}\times \widetilde{\varphi_{k\boldsymbol{\nu}}}\,:\,X_{\boldsymbol{\nu}}\times X_{\boldsymbol{\nu}} \rightarrow \mathbb{C}^{d_{k\boldsymbol{\nu}}}\times \mathbb{C}^{d_{k\boldsymbol{\nu}}},\quad \Phi_{k\boldsymbol{\nu}}(x,y):=(\widetilde{\varphi_{k\boldsymbol{\nu}}}(x),\,\widetilde{\varphi_{k\boldsymbol{\nu}}}(y))\,.  \]

The proof of the following Lemma is left to the reader.

\begin{lem} \label{lem:com}
	$\Phi_{k\boldsymbol{\nu}}^{*}$ commutes with $\mathrm{d}^{(1)}$ and $\mathrm{d}^{(2)}$.
\end{lem}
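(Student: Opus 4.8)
The plan is to reduce the statement to the naturality of the exterior derivative together with the fact that $\Phi_{k\boldsymbol{\nu}}$ respects the product structure of source and target. First I would recall that on any product manifold $P_1\times P_2$ the de Rham complex carries a bigrading: a form is of bidegree $(p,q)$ if it has $p$ differentials coming from the first factor and $q$ from the second, and the total exterior derivative splits as $\mathrm{d}=\mathrm{d}^{(1)}+\mathrm{d}^{(2)}$, where $\mathrm{d}^{(1)}$ raises the first degree and $\mathrm{d}^{(2)}$ the second. This applies both to $(\mathbb{C}^{d_{k\boldsymbol{\nu}}}\setminus\{0\})\times(\mathbb{C}^{d_{k\boldsymbol{\nu}}}\setminus\{0\})$, with $\mathrm{d}^{(1)}$ and $\mathrm{d}^{(2)}$ as in the statement, and to $X\times X$ with its own pair of partial differentials.

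The key observation I would isolate is that $\Phi_{k\boldsymbol{\nu}}^{*}$ preserves this bigrading. Writing $\zeta$ and $\eta$ for the coordinates on the two target factors, the definition $\Phi_{k\boldsymbol{\nu}}(x,y)=(\widetilde{\varphi_{k\boldsymbol{\nu}}}(x),\widetilde{\varphi_{k\boldsymbol{\nu}}}(y))$ shows that $\zeta\circ\Phi_{k\boldsymbol{\nu}}$ depends only on the source variable $x$ and $\eta\circ\Phi_{k\boldsymbol{\nu}}$ only on $y$. Hence $\Phi_{k\boldsymbol{\nu}}^{*}\mathrm{d}\zeta^i$ is a one-form with legs only in the first $X$-factor and $\Phi_{k\boldsymbol{\nu}}^{*}\mathrm{d}\eta^j$ only in the second; since $\Phi_{k\boldsymbol{\nu}}^{*}$ is an algebra homomorphism on forms, it sends a form of bidegree $(p,q)$ to one of bidegree $(p,q)$.

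With this in hand the conclusion follows formally. For a form $\alpha$ of pure bidegree $(p,q)$ on the target, naturality of $\mathrm{d}$ gives
\[\mathrm{d}^{(1)}\Phi_{k\boldsymbol{\nu}}^{*}\alpha+\mathrm{d}^{(2)}\Phi_{k\boldsymbol{\nu}}^{*}\alpha=\mathrm{d}\,\Phi_{k\boldsymbol{\nu}}^{*}\alpha=\Phi_{k\boldsymbol{\nu}}^{*}\mathrm{d}\alpha=\Phi_{k\boldsymbol{\nu}}^{*}\mathrm{d}^{(1)}\alpha+\Phi_{k\boldsymbol{\nu}}^{*}\mathrm{d}^{(2)}\alpha.\]
Because $\Phi_{k\boldsymbol{\nu}}^{*}$ preserves bidegrees, the two terms on the left are of bidegree $(p+1,q)$ and $(p,q+1)$, and likewise on the right; matching the unique bidegree components yields $\mathrm{d}^{(1)}\Phi_{k\boldsymbol{\nu}}^{*}\alpha=\Phi_{k\boldsymbol{\nu}}^{*}\mathrm{d}^{(1)}\alpha$ and $\mathrm{d}^{(2)}\Phi_{k\boldsymbol{\nu}}^{*}\alpha=\Phi_{k\boldsymbol{\nu}}^{*}\mathrm{d}^{(2)}\alpha$. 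Extending by linearity over all bidegrees gives the claim.

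The argument is essentially formal, so there is no genuine obstacle; the only point requiring care is the bidegree-preservation step, which rests precisely on $\Phi_{k\boldsymbol{\nu}}$ being a product map $\widetilde{\varphi_{k\boldsymbol{\nu}}}\times\widetilde{\varphi_{k\boldsymbol{\nu}}}$ rather than an arbitrary map $X\times X\to\mathbb{C}^{d_{k\boldsymbol{\nu}}}\times\mathbb{C}^{d_{k\boldsymbol{\nu}}}$, for which the two partial differentials would not be intertwined. I would therefore state and use this product structure explicitly at the outset.
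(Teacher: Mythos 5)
Your proof is correct and complete: reducing the claim to naturality of $\mathrm{d}$ plus the fact that the product map $\Phi_{k\boldsymbol{\nu}}=\widetilde{\varphi_{k\boldsymbol{\nu}}}\times\widetilde{\varphi_{k\boldsymbol{\nu}}}$ preserves the bidegree decomposition, and then matching bidegree components, is exactly the standard argument this lemma calls for. The paper itself offers no proof (it is ``left to the reader''), so your write-up simply supplies the intended argument, with the one genuinely necessary observation --- bidegree preservation coming from the product structure --- correctly identified and justified.
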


Thus we can pullback $\Omega$ via $\Phi_{k\boldsymbol{\nu}}$, by \eqref{phipi} and Lemma \ref{lem:com} we can easily obtain 
\[\frac{1}{k}\Phi_{k\boldsymbol{\nu}}^*\Omega= \frac{i}{2k}\,\Phi_{k\boldsymbol{\nu}}^*\mathrm{d}^{(1)}\mathrm{d}^{(2)}\log \zeta\cdot \overline{\eta} = \frac{i}{2k}\,\mathrm{d}^{(1)}\mathrm{d}^{(2)}\Phi_{k\boldsymbol{\nu}}^*\log \zeta\cdot \overline{\eta} =
\frac{i}{2k}\,\mathrm{d}^{(1)}\mathrm{d}^{(2)}\log \Pi_{k\boldsymbol{\nu}}\,.  \] 

Consider a small open neighborhood $U\subset Y_{\boldsymbol{\nu}}=X_{\boldsymbol{\nu}}/\exp_T(i\,\boldsymbol{\nu}^0)$ of a point $y$ in $Y_{\boldsymbol{\nu}}$ and $Q^{-1}_{\boldsymbol{\nu}}(U)\subset X_{\boldsymbol{\nu}}$. Let $x\in X_{\boldsymbol{\nu}}$ such that $Q_{\boldsymbol{\nu}}(x)=y$. Since $Q_{\boldsymbol{\nu}}\,:\,X_{\boldsymbol{\nu}}\rightarrow Y_{\boldsymbol{\nu}}$ is a principal $\exp_T(i\boldsymbol{\nu}^0)$-bundle over $Y_{\boldsymbol{\nu}}$, we can assume $U$ to be sufficiently small so that there exists a trivializing section $s\,:\, U \rightarrow Q^{-1}_{\boldsymbol{\nu}}(U)$ with $V:=s(U)\subseteq Q^{-1}_{\boldsymbol{\nu}}(U)$.
%We would like to adapt a system of Heisenberg local coordinates centered at $x$ in $X_{\boldsymbol{\nu}}\subseteq X$ to $j_{\boldsymbol{\nu}}(V)$, where $j_{\boldsymbol{\nu}}\,:\, X_{\boldsymbol{\nu}}\rightarrow X$ is the embedding. A Heisenberg local chart determines a linear isometry $T_xX\cong \mathbb{R}\oplus \mathbb{C}^d$. 
Let us denote by $T^{\mathrm{t}}_mM^{\sharp}$ the space of vectors in $T_xX$ which are the horizontal lift of vectors in $T^{\mathrm{t}}_mM$. By \eqref{eq:decomposition}, where $x\in X_{\boldsymbol{\nu}}$ with $\pi(x)=m$ we have 
\[T_xX=T^{\mathrm{t}}_mM^{\sharp}\oplus T_xX_{\boldsymbol{\nu}}
\]
since, by formula \eqref{eq:inf}, for every $\boldsymbol{\xi}\in i\boldsymbol{\nu}^0$, we have $\boldsymbol{\xi}_X(x)= \boldsymbol{\xi}_M^{\sharp}(m)$. Since the vectors in $T^{\mathrm{v}}_mM=(i\boldsymbol{\nu})^0_M(m)$ are tangent to the fibers of the principal bundle $Q_{\boldsymbol{\nu}}\,:\,X_{\boldsymbol{\nu}}\rightarrow Y_{\boldsymbol{\nu}}$ in $X_{\boldsymbol{\nu}}\subseteq X$, then $s$ can be chosen so that a basis for $T_xV$ consists of a basis for $T^{\mathrm{hor}}_mM$ and $\boldsymbol{\xi}_{1,\,X}(x)$. Note that  that $T^{\mathrm{hor}}_xM^{\sharp}$ projects isomorphically to $T_nN_{\boldsymbol{\nu}}$ where $x$ is such that $p(x)=n$ and $p\,:\,X_{\boldsymbol{\nu}}\rightarrow N_{\boldsymbol{\nu}}$ is the projection.

Thus, let us consider $y\in U \subseteq Y_{\boldsymbol{\nu}}$ with $Q_{\boldsymbol{\nu}}(x)=y$. We compose with the diagonal map $\mathrm{diag}$ and we make use of Theorem \ref{thm:tang}, using Heisenberg coordinate, for $\mathbf{v}_1,\, \mathbf{v}_2\in T^{\mathrm{hor}}_mM$  we get
\begin{align*}
	\frac{1}{k}\widetilde{\varphi_{k\boldsymbol{\nu}}}^*{\omega}_{d_k}\big\vert_{V} &= \frac{i}{2k}\,\mathrm{diag}^*\mathrm{d}^{(1)}\mathrm{d}^{(2)}\log \Pi_{k\boldsymbol{\nu}}\left(x+\left(\frac{\theta_1}{\sqrt{k}},\,\frac{\mathbf{v}_1}{\sqrt{k}}\right),\,x+\left(\frac{\theta_2}{\sqrt{k}},\,\frac{\mathbf{v}_2}{\sqrt{k}}\right)\right) \\	&=
\frac{i}{2k}\,\mathrm{diag}^*\mathrm{d}^{(1)}\mathrm{d}^{(2)}\left[\frac{1}{\varsigma(m_x)}\,F^{\mu}_k\left(\left(\frac{\theta_1}{\sqrt{k}},\,\frac{\mathbf{v}_1}{\sqrt{k}}\right),\,\left(\frac{\theta_2}{\sqrt{k}},\,\frac{\mathbf{v}_2}{\sqrt{k}}\right)\right)\right] +O\left(1/\sqrt{k}\right)  .
\end{align*}
Thus, by the description of local coordinates on $V\subset X_{\boldsymbol{\nu}}$ for the circle bundle $\pi_{\boldsymbol{\nu}}\,:\,Y_{\boldsymbol{\nu}}\rightarrow N_{\boldsymbol{\nu}}$ at the end of section \ref{sctn:szego fio}, we obtain
\begin{align*}
	% &\frac{i}{2k}\,\mathrm{diag}^*\mathrm{d}^{(1)}\mathrm{d}^{(2)}\left[\frac{1}{\varsigma(m_x)}\, F_k^{SZ}\left(\tilde{\mu}_{\exp_T \left(-\theta_1\,\frac{\boldsymbol{\xi}_1}{k}\right)}\left(x-\frac{\mathbf{v}_1}{\sqrt{k}}\right),\,\tilde{\mu}_{\exp_T \left(-\theta_2\,\frac{\boldsymbol{\xi}_1}{k}\right)}\left(x-\frac{\mathbf{v}_2}{\sqrt{k}}\right)\right)\right] \\
	 \frac{1}{k}\widetilde{\varphi_{k\boldsymbol{\nu}}}^*{\omega}_{d_k}\big\vert_{V}& =\frac{i}{2k}\,\mathrm{diag}^*\mathrm{d}^{(1)}\mathrm{d}^{(2)}\left[\frac{1}{\varsigma(m_x)}\, \psi_2\left({\mathbf{v}_1},\,\mathbf{v}_2\right)\right] +O\left(1/\sqrt{k}\right)\\
	 &=\frac{1}{\varsigma(m_x)}\,\pi^*\omega{\vert_V} +O\left(1/\sqrt{k}\right)\,.
\end{align*}
Since in Heisenberg local coordinates $\omega$ is the standard form on $\mathbb{C}^{d-r+1}$ (where $r$ is the dimension of the torus $T$), see \cite{sz}, and by the discussion in Section \ref{sec:kahlerN}, see Lemma \ref{lem:2.1}, we get (2) in Theorem \ref{cor:aimi}. Thus, we also get $(1)$ since, as a consequence of (2) we get that $\mathrm{d}\varphi_{k\boldsymbol{\nu}}$ is injective for $k$ large.

Now, suppose that the stabilizer $\lvert T_x \rvert >1$, hence $n$ is a singular point in $N_{\boldsymbol{\nu}}$ and fix $x$ such that $p(x)=n$ where $p\,:\,X_{\boldsymbol{\nu}}\rightarrow N_{\boldsymbol{\nu}}$ is the projection. Since $N_{\boldsymbol{\nu}}$ is an orbifold there exists a local chart $(T_x,\, \widetilde{U_{n}})$.  By the Slice Theorem, see Appendix B in \cite{ggk}, a neighborhood of the orbit of $x$ is equivariantly diffeomorphic to a neighborhood of the zero section of the associated principal bundle $T\times_{T_x} D$ where $D$ is a sufficiently small neighborhood of the origin in $T_{m}^{\mathrm{hor}}M_{{\boldsymbol{\nu}}}$ with $\pi(x)=m$.

Now, $\Pi_{k\boldsymbol{\nu}}(x,\,x)$ can be zero for some $k$ even when $k$ is large. To solve this problem, we note that there exists a subsequence $k_n$ such that for each $x\in X_{\boldsymbol{\nu}}$
\[\sum_{t\in T_x}\chi_{k_n\boldsymbol{\nu}}(t)\neq 0\,. \]
In fact, we can choose $k_n$ to be divisible by the least common multiple of $\lvert T_x\rvert$, for $x\in X_{\boldsymbol{\nu}}$, (there are only finitely many possible stabilizers). As a consequence of 
\[ \widetilde{\varphi_{k\boldsymbol{\nu}}}(\tilde{\mu}_{t^{-1}}(x))= \chi_{k\boldsymbol{\nu}}(t)\sum_{j=1}^{d_{k\boldsymbol{\nu}}} s_{j}^{k\boldsymbol{\nu}}(x)(s_{j}^{k\boldsymbol{\nu}})=\hat{\mu}_t\left(\widetilde{\varphi_{k\boldsymbol{\nu}}}(x)\right)\,, \]
we note $\varphi_{k\boldsymbol{\nu}}$ is a $T_x$-equivariant map from an open neighborhood $\widetilde{U_{n}}\cong D$ of $\widetilde{n}_x$ to $\mathbb{CP}^{d_{k\boldsymbol{\nu}}-1}_{\chi_{k\boldsymbol{\nu}}}$, where in the orbifold local chart $\pi_{U_n}\,:\,(\widetilde{U_{n}},\,G_{{U_{n}}})\rightarrow U_n$ we have that $\pi_{U_n}(\widetilde{n}_x)=n_x$. 

Eventually, we shall prove that the maps $\varphi_{k\boldsymbol{\nu}}$ are embeddings for $k$ sufficiently large. In order to prove it, we study long-range and short-range injectivity as in \cite{sz}. Thus, given $x,\,y\in X_{\boldsymbol{\nu}}$ in the range where $\sqrt{k}\,\mathrm{dist}_X(T\cdot x,\,y)$ tends to infinity as $k\rightarrow +\infty$, it follows in a similar way as in \cite{sz} that $\varphi_{k\boldsymbol{\nu}}(n_x)\neq \varphi_{k\boldsymbol{\nu}}(n_y)$ as an easy consequence of \cite{pao-IJM}.

For short-range injectivity we consider $\mathbf{v}_k\in T_m^{\mathrm{hor}}M^{\sharp}$, where $m=\pi(x)$ and $x\in X_{\boldsymbol{\nu}}$, such that $0\neq\lVert \mathbf{v}_k \rVert\leq C$ and assume
\begin{align}
	{{\varphi_{k\boldsymbol{\nu}}}\left(n_{x+{\mathbf{v}_k}/{\sqrt{k}}}\right)} = {{\varphi_{k\boldsymbol{\nu}}}(n_x)}
	\label{eq:inj0}
\end{align}
where recall that $T^{\mathrm{hor}}_xM^{\sharp}$ projects isomorphically to $T_nN_{\boldsymbol{\nu}}$ where $x$ is such that $p(x)=n_x$ and $p\,:\,X_{\boldsymbol{\nu}}\rightarrow N_{\boldsymbol{\nu}}$ is the projection.
Thus, \eqref{eq:inj0} is equivalent to
\begin{equation}\frac{\widetilde{\varphi_{k\boldsymbol{\nu}}}(x)}{{\lVert \widetilde{\varphi_{k\boldsymbol{\nu}}}(x) \rVert}}\wedge \frac{\widetilde{\varphi_{k\boldsymbol{\nu}}}(x+\mathbf{v}_k/\sqrt{k})}{{\lVert \widetilde{\varphi_{k\boldsymbol{\nu}}}(x+\mathbf{v}_k/\sqrt{k}) \rVert}}=0 \,. \label{eq:inj}
\end{equation}
Thus, by \eqref{phipi}, equation \eqref{eq:inj} is equivalent to
\begin{equation}
	\left\lvert \frac{\Pi_{k\boldsymbol{\nu}}(x,\,x+\mathbf{v}_k/\sqrt{k})}{\sqrt{\Pi_{k\boldsymbol{\nu}}(x,\,x)}\cdot \sqrt{\Pi_{k\boldsymbol{\nu}}(x+\mathbf{v}_k/\sqrt{k},\,x+\mathbf{v}_k/\sqrt{k})} }\right\rvert = 1\,.  \label{eq:1}
\end{equation}

Define, for a sufficiently small $\epsilon$, $f\,:\,[-\epsilon,\,1+\epsilon]\rightarrow \mathbb{R}$ 
\[ f_k(\tau)=\frac{\lvert\Pi_{k\boldsymbol{\nu}}(x,\,x+\tau\,\mathbf{v}_k/\sqrt{k})\rvert^2}{{\Pi_{k\boldsymbol{\nu}}(x,\,x)}\cdot {\Pi_{k\boldsymbol{\nu}}(x+\tau\,\mathbf{v}_k/\sqrt{k},\,x+\tau\,\mathbf{v}_k/\sqrt{k})} }\]
and note that $f_k(0)=1$ and, by \eqref{eq:1}, $f_k(1)=1$ (the denominator is non-zero provided we pass to a subsequence). Furthermore, by \eqref{phipi} and the Cauchy–Schwartz inequality,  $0\leq f_k \leq 1$. Hence we have $f_k'(0)=0$ (note that $f$ is defined on $[-\epsilon,\,1+\epsilon]$).
%and $f_k'(1)=O(k^{-1})$. 
Thus, there exists $\tau_k\in (0,\,1)$ such that $f''_k(\tau_k)=0$.

Eventually, starting with \eqref{eq:1} and expanding in decreasing half-integer powers of $k$, 
\[ f_k(\tau)= \sum_{t\in T_x}\chi_{k\boldsymbol{\nu}}(t)\cdot e^{-\frac{\tau^2}{2\,\varsigma(m)}\,\lVert\mathbf{v}_k\rVert^2}\left[1+k^{-1/2}R(\tau,\,\mathbf{v}_k)\right]\]
where $R_3(\tau,\,\mathbf{v}_k)=O(1)$. Note that, since $f_k(1)=1$, we get $\lVert \mathbf{v}_k \rVert^2=O(k^{-1/2})$.
%Since $f_k'(1)=O(k^{-1})$, we get $\lVert \mathbf{v}_k \rVert^2=O(k^{-1/2})$. 
Thus, we obtain $$f''_k(\tau)=(-1+O(1))\sum_{t\in T_x}\chi_{k\boldsymbol{\nu}}(t)\cdot\lVert \mathbf{v}_k \rVert^2\,.$$ By evaluating at $\tau_k$, we get $\mathbf{v}_k=\boldsymbol{0}$.

\bigskip
\textbf{Acknowledgments:} I express my gratitude to the referee for valuable suggestions and corrections, which have contributed to the improvement of this paper. 

I would like to thank the referee of a previous version of this paper for providing several comments and for pointing out a mistake in a previous version of this paper that led to a change in the statement of Corollary 1.1. 

I would also like to express my gratitude to Prof. R. Paoletti for suggesting the study of immersion problems. This project was initiated during my postdoctoral fellowship at the National Center for Theoretical Sciences in Taiwan and was completed at the University of Milano-Bicocca. I would like to thank both institutions for their support.

Additionally, I extend my thanks to the Mathematics Institute of Universität zu K\"oln for their hospitality throughout my scholarship: the author is supported by INdAM (Istituto Nazionale di Alta Matematica) foreign scholarship.

\end{document}